\documentclass[11pt]{amsart}
\usepackage{amsfonts,amssymb,latexsym,amsmath, amsxtra}
\usepackage[all]{xy}
\usepackage[dvips]{graphics}

\pagestyle{myheadings}

\textheight=8.5 true in \textwidth=6.5 true in \hoffset=-0.8true in

\usepackage[OT2,T1]{fontenc}
\DeclareSymbolFont{cyrletters}{OT2}{wncyr}{m}{n}
\DeclareMathSymbol{\Sha}{\mathalpha}{cyrletters}{"58}

\theoremstyle{plain}
\newtheorem{theorem}{Theorem}[section]

\newtheorem{lemma}[theorem]{Lemma}

\newtheorem*{conjecture*}{Conjecture}

\theoremstyle{definition}

\theoremstyle{remark}
\newtheorem*{remark}{Remark}
\newtheorem*{remarks}{Remarks}

\numberwithin{equation}{section}
\newcommand{\Qp}{\widehat{\overline{\Q}}_p}

\newcommand{\R}{\mathbb R}
\newcommand{\N}{\mathbb N}
\newcommand{\Z}{\mathbb Z}
\newcommand{\C}{\mathbb C}

\newcommand{\Q}{{\mathbb Q}}

\def\ord{\text{ord}}

\def\H{\mathbb H}

\def\SL{\rm SL}

\def\({\left(}
\def\){\right)}

\newcommand{\im}[1]{\text{Im}\(#1\)}

\newcommand{\FF}{\mathcal{F}}

\def\k2{\frac{k}{2}}

\begin{document}

\title{Explicit congruences for mock modular forms}

\author{Ben Kane}
\address{Department of Mathematics\\ University of Hong Kong\\ Pokfulam, Hong Kong}
\email{bkane@hku.hk}

\author{Matthias Waldherr}
\address{Schwebheimerstr. 10\\
97469 Gochsheim, Germany}
\thanks{ The research of the first author was supported by grant project numbers 27300314 and 17302515 of the Research Grants Council.  Part of this research was conducted while the first author was a postdoc at the University of Cologne and the second author was a Ph.D. student at the University of Cologne}
\subjclass[2010] {11F33,11F30,11F12,11F25}
\keywords{Congruences, mock modular forms, $p$-adic modular forms}

\date{\today}
\thispagestyle{empty} \vspace{.5cm}

\begin{abstract}
In recent work of Bringmann, Guerzhoy, and the first author, $p$-adic modular forms were constructed from mock modular forms.  This paper proves explicit congruences for these $p$-adic modular forms.
\end{abstract}
\maketitle

\section{Introduction and statement of results}

Let $2\leq k\in 2\Z$ be given and let $S_k$ denote the space of weight $k$ cusp forms.  Given a newform $g\in S_k$, there is a closely linked weight $2-k$ non-holomorphic modular form $M$, known as a harmonic weak Maass form, which maps to $g/\|g\|^2$ under the differential operator $\xi_{2-k}:=2iy^{2-k} \overline{\frac{\partial}{\partial\overline{z}}}$, where $\|\cdot\|$ is the usual Petersson norm and $z=x+iy$ in the usual splitting in the upper half-plane $\H$.  The function $M$ further naturally splits into a holomorphic part $M^+$, referred to as a mock modular form, and a non-holomorphic part $M^{-}$.  Both parts have a Fourier expansion, which can be expressed as a $q$-series for the holomorphic part.  Although the coefficients of $M^+$ are generally expected to be transcendental, Guerzhoy, Kent, and Ono \cite{GKO} proved that there exists an $\alpha_1\in\C$ such that 
$$
\mathcal{F}_{\alpha_1}:=M^+-\alpha_1E_g
$$
with algebraic coefficients by subtracting a certain (likely transcendental) constant multiple of the (holomorphic) Eichler integral $E_g$ corresponding to $g$.   From this, they construct a family of formal power series $\mathcal{F}_{\alpha}$ by adding $\alpha E_g$ for algebraic $\alpha$.  They then $p$-adically link the coefficients of $\mathcal{F}_{\alpha}$ to the coefficients of $g$ except for at most one exceptional choice of $\alpha$.  Constructing this exceptional choice of $\alpha$ within an algebraic closure $\widehat{\overline{Q_p}}$ of the $p$-adic completion of $\Q_p$ and further correcting the series with another Eichler integral integral closely related to $g$, Bringmann, Guerzhoy, and the first author \cite{BGK} considered a further modification 
$$
\mathcal{F}_{\alpha,\delta}:=\mathcal{F}_{\alpha}-\delta E_{g|V_p},
$$
where $\delta\in\widehat{\overline{Q_p}}$.  This refinement turns out to be a $p$-adic modular form for a unique choice of $\alpha$ and $\delta$ whenever the $p$-order $\ord_p(\lambda_p)$ of the $p$th Hecke eigenvalue $\lambda_p$ of $g$ satisfies $0<\ord_p(\lambda_p)<(k-1)/2$.  What is worth noting here is that although the mock modular form is not itself modular (and likely has transcendental coefficients), the $p$-adic modular form $\mathcal{F}_{\alpha,\delta}$ is congruent to weakly holomorphic modular forms (those meromorphic modular forms whose only possible poles are occur at cusps).  The poles of these weakly holomorphic modular forms increase with the power of the prime $p$ in the congruence, but the weight remains $k$.

The purpose of this paper is to construct congruences between $\mathcal{F}_{\alpha,\delta}$ and (classical) modular forms (but now of varying weights).  The naive approach of determining $\alpha$ and $\delta$ explicitly and computing $\mathcal{F}_{\alpha,\delta}$ from the original mock modular form $M^+$ runs into a number of problems.  First of all, the number $\alpha$ is a (formal) linear combination of a (probably) transcendental number plus a $p$-adic limit.  While the $p$-adic limit could be reasonably approximated, given the algebraic coefficients of the first correction $\mathcal{F}_{\alpha_1}$, one can only compute approximations to the transcendental part of $\alpha$.  One might be able to approximate $\alpha$ well enough, knowing that the resulting coefficients are rational (and somehow approximating the denominators).  To avoid this problem, we access the $p$-adic modular form more directly; we begin at an intermediary step in the proof of the existence of the $p$-adic modular form.  It turns out that although the $p$-adic modular form itself depends on the existence of $\alpha$ and $\delta$, it can be computed independently of $\alpha$ and $\delta$ (see \eqref{eqn:Fadfinal}).  We then prove the following explicit congruence for the mock modular form associated to the weight $12$ cusp form $\Delta(z):=q\prod_{n=1}^{\infty} \left(1-q^n\right)^{24}=\sum_{n\geq 1} \tau(n)q^n$, with $q:=e^{2\pi i z}$.
\begin{theorem}\label{thm:main}
Suppose that $p\geq 3$ is prime and $M$ is the harmonic weak Maass form with a simple pole at the cusp $i\infty$ (in its holomorphic part) for which $\xi_{-10}(M)=\Delta/\|\Delta\|^2$.  If $0<\ord_p\left(\tau(p)\right)=v<(k-1)/2$, then there is a unique $\alpha\in\R+\widehat{\overline{\Q_p}}$ and $\delta\in\widehat{\overline{\Q_p}}$ such that $\mathcal{F}_{\alpha,\delta}$ is a $p$-adic modular form.  

\begin{enumerate}
\item
For this unique choice of $\alpha$ and $\delta$ and for every $\ell\in\N$, there exists a weight $2+k_p p^{\ell_1+1}$ holomorphic modular form $g_{\ell}$ on $\Gamma_0(p)$ such that 
$$
\mathcal{F}_{\alpha,\delta}\Delta \equiv g_{\ell}\pmod{p^{\ell}},
$$
where $k_p=p-1$ for $p>3$, $k_3=4$, and $\ell_1:=\max\{ 11/v + \ell/v,\ell-1\}$. 
\item
For $p=3$, we have
\begin{align*}
3^7\mathcal{F}_{\alpha,\delta}\Delta&\equiv 1\pmod{3},\\
3^7\mathcal{F}_{\alpha,\delta}\Delta&\equiv E_2\pmod{3^2},\\
3^7\mathcal{F}_{\alpha,\delta}\Delta&\equiv E_2+9\Delta\pmod{3^3},
\end{align*}
where $E_2(z):=1-24\sum_{n=1}^{\infty}\sigma_1(n)q^n$ with $\sigma_r(n)=\sum_{d\mid n}d^r$.
\end{enumerate}
\end{theorem}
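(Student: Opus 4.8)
The plan is to sidestep computing $\alpha$ and $\delta$ entirely and instead work from the intermediary expression \eqref{eqn:Fadfinal}, which already presents $\mathcal{F}_{\alpha,\delta}$ as an explicit $p$-adic limit. Multiplying through by $\Delta$, the simple pole of the mock part at $i\infty$ is cancelled by the zero of $\Delta$, so that $\mathcal{F}_{\alpha,\delta}\Delta$ is a weight-$2$ $p$-adic modular form whose only possible pole is at $i\infty$ and which is in fact holomorphic there. First I would use \eqref{eqn:Fadfinal} to produce, for each $\ell\in\N$, an explicit weight-$2$ weakly holomorphic modular form $H_\ell$ on $\Gamma_0(p)$ with $p$-integral coefficients and $H_\ell\equiv \mathcal{F}_{\alpha,\delta}\Delta\pmod{p^\ell}$. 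The limit lives on $\Gamma_0(p)$ precisely because it is assembled from the $p$-stabilization of $\Delta$ together with the operators $U_p$ and $V_p$, which also keep the approximants holomorphic at the cusp $0$.

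The crucial bookkeeping is to bound the order of the pole of $H_\ell$ at $i\infty$. Since the negative Fourier coefficients of $\mathcal{F}_{\alpha,\delta}\Delta$ vanish, all polar coefficients of $H_\ell$ are divisible by $p^\ell$; what must be estimated is how deep a pole one is forced to carry in order to reach $p$-adic precision $p^\ell$. The two competing contributions are the valuation $v=\ord_p(\tau(p))$ gained at each application of the relevant $U_p$/eigenvalue step, weighed against the denominators $n^{k-1}=n^{11}$ introduced by the Eichler integrals $E_\Delta$ and $E_{\Delta|V_p}$. Balancing these shows the pole order of $H_\ell$ is at most $\max\{(11+\ell)/v,\ell-1\}=\ell_1$, which is exactly the exponent appearing in the target weight.

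With $H_\ell$ in hand, the remaining task is to trade its (mod $p^\ell$ invisible) pole for genuine holomorphy while staying among classical modular forms. I would multiply by $E_{k_p}^{p^{\ell_1+1}}$; since $E_{k_p}\equiv 1\pmod p$ one has $E_{k_p}^{p^{\ell_1+1}}\equiv 1\pmod{p^{\ell_1+2}}$, so this preserves the class modulo $p^\ell$ while raising the weight to $2+k_p p^{\ell_1+1}$ (for $p=3$ the non-modular $E_2$ of weight $p-1=2$ is unavailable, so one takes $k_3=4$ and uses $E_4\equiv 1\pmod 3$). The point of pushing the weight so far above the pole order $\ell_1$ is that in sufficiently large weight the holomorphic modular forms on $\Gamma_0(p)$ should surject, modulo $p^\ell$, onto the holomorphic $q$-expansions realized by $p$-integral weakly holomorphic forms of that pole order; this permits replacing $H_\ell E_{k_p}^{p^{\ell_1+1}}$ by an honestly holomorphic $g_\ell$ of weight $2+k_p p^{\ell_1+1}$ with $g_\ell\equiv \mathcal{F}_{\alpha,\delta}\Delta\pmod{p^\ell}$, giving (1). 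I expect this pole-clearing step to be the main obstacle: one must show, via the filtration theory of mod $p$ modular forms and surjectivity of multiplication by the Hasse invariant in high weight, that the weight $2+k_p p^{\ell_1+1}$ exceeds the pole order by enough to guarantee a $p$-integral holomorphic representative, and that the count leading to $\ell_1$ is uniform in $\ell$.

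For (2) I would specialize to $p=3$, where $v=\ord_3(\tau(3))=\ord_3(252)=2$ indeed satisfies $0<v<(k-1)/2$, and simply read off the first three approximations from \eqref{eqn:Fadfinal}. After normalizing by $3^7$ to clear the denominators predicted by the valuation analysis, the weight-$2$ reductions are pinned down by only their first few Fourier coefficients, and one recognizes them successively as $1$ modulo $3$, then $E_2$ modulo $3^2$, then $E_2+9\Delta$ modulo $3^3$, using that $E_2$ is itself a $3$-adic modular form of weight $2$ and that $9\Delta$ is the first correction that becomes visible modulo $3^3$.
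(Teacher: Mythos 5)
Your entry point is the same as the paper's---start from the explicit series \eqref{eqn:Fadfinal}, use $\ord_p(\beta)=v$ to truncate it modulo $p^{\ell}$, and thereby sidestep $\alpha$ and $\delta$ entirely---but your pole bookkeeping goes wrong at a point that breaks the rest of the argument. You place the dangerous pole at $i\infty$ and declare its coefficients divisible by $p^{\ell}$; in fact, by \eqref{eqn:Rpellinfty} each truncation $R_p|U_{p^{n-1}}$ with $n\geq 2$ is already $O(1)$ at $i\infty$ (and the single $q^{-1}$ for $n=2$ is killed by $\Delta$), so after multiplying by $\Delta$ there is no pole at $i\infty$ at all. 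The genuine poles sit at the cusp $0$, of order up to $p^{r_{\ell}+1}$ by Lemma \ref{lem:RpUpPP}, and the congruence $H_{\ell}\equiv\mathcal{F}_{\alpha,\delta}\Delta\pmod{p^{\ell}}$ is a statement about the expansion at $i\infty$ only: it gives no $p$-divisibility of the polar coefficients at $0$, so the pole is not ``mod $p^{\ell}$ invisible.'' Consequently your pole-clearing step fails as stated: multiplying by the plain Eisenstein series $E_{k_p}^{p^{\ell_1+1}}$ raises the weight but does nothing at the cusp $0$, and the surjectivity you then invoke (filtration theory, multiplication by the Hasse invariant) is precisely the unproven obstacle you flag. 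The paper resolves this with a concrete device you are missing: the stabilized Eisenstein series $E_{k_p,p}:=E_{k_p}-p^{k_p}E_{k_p}(pz)$, which \emph{vanishes at the cusp $0$} (Lemma \ref{lem:Ekp}) while satisfying $E_{k_p,p}^{p^{\ell_1}}\equiv 1\pmod{p^{1+\ell_1}}$ via Serre's congruence \eqref{eqn:Serrecong}, so a single multiplication simultaneously kills the pole and preserves the congruence, yielding the explicit holomorphic $g_{\ell}$. A further misdiagnosis: the constant $11$ in $\ell_1$ comes from $R_p=p^{1-k}f_{p,-10}$ (Lemma \ref{lem:RpPP}), i.e.\ from $p^{k-1}R_p$ being integral with $k-1=11$, not from ``denominators $n^{11}$ introduced by the Eichler integrals''---the Eichler integrals never appear in \eqref{eqn:Fadfinal}, having been annihilated by the operator $B_p$; your numerology matches only by coincidence.

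Part (2) has the same structural gap in sharper form. ``Reading off'' the first three reductions and ``recognizing'' them as $1$, $E_2$, $E_2+9\Delta$ is not a proof: a congruence modulo $3^m$ between a weakly holomorphic form carrying a deep pole at the cusp $0$ and an expression involving the \emph{non-modular} $E_2$ cannot be certified by finitely many coefficients without first converting both sides into honest holomorphic modular forms of known weight and level, where Sturm's bound applies. The paper does exactly this: it replaces $E_2$ by the genuinely modular $\widetilde{E}_{2,3}:=E_2-3E_2|V_3$ on $\Gamma_0(3)$, proves the auxiliary congruences \eqref{eqn:E2cong} and \eqref{eqn:Deltacong} to rewrite $E_2+9\Delta$ modulo $3^3$ in terms of holomorphic forms, bounds the pole at $0$ of the truncated series (order $79$ in $q$), clears it by multiplying both sides by $E_{4,3}^{237}$, and then verifies a congruence between weight-$968$ holomorphic forms on $\Gamma_0(3)$ by comparing $323$ coefficients (which in turn requires $8721$ coefficients of $R_3$). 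Your appeal to ``$E_2$ is a $3$-adic modular form'' does not substitute for this finite certificate, since $p$-adic modularity gives no a priori Sturm-type bound for the mixed weakly-holomorphic comparison you need.
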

\begin{remark}
The congruence for $p=3$ was indicated by computational evidence in \cite{BGK}.  Note that the factor $3^7$ in front of the left-hand side corrects a typo from renormalization.
\end{remark}
By beginning in the intermediary step of the proof from \cite{BGK}, we are able to write $\mathcal{F}_{\alpha,\delta}$ as an explicit infinite ($p$-adic) linear combination of weakly holomorphic modular forms.  Only finitely many of the terms are non-zero modulo $p^{\ell}$, giving a congruence to an explicit weakly holomorphic modular form (with $p$-adic coefficients).  It is hence useful to construct weakly holomorphic modular forms.

Denote the dimension of $S_k$ by $d:=\dim_{\C}\left(S_k\right)$.  Then there is a unique weakly holomorphic modular form $f_{m,2-k}\in M_{2-k}^!$ for which
$$
f_{m,2-k}(z)=q^{-m}+O\left(q^{-d}\right).
$$
Such weakly holomorphic modular forms are explicitly constructed by Duke and Jenkins \cite{DukeJenkins08} as
\begin{equation}\label{eqn:fmexplicit}
f_{m,2-k}(z) := \left \{ \begin{array}{cl} 
E_{k'}(z) \Delta(z)^{-d-1} F_m( j(z)) & \text{ if }m > d,\\
0 & \text{ if } m\leq d,
\end{array} \right.
\end{equation}
where $k'\in \{ 0,4,6,8,10,14\}$ with $k'\equiv 2-k\pmod{12}$, $E_{k'}$ is the Eisenstein series of weight $k'$, and $F_m$ is a generalized Faber polynomial of degree $m-d-1$ chosen recursively in terms of $f_{m',2-k}$ with $m'<m$ to cancel higher powers of $q$.

The connection between $\mathcal{F}_{\alpha,\delta}$ and this basis of weakly holomorphic modular forms arises via the theory of Hecke operators.  In particular, we construct a weakly holomorphic modular form $R_p$ related to $M^+$ which is a constant multiple of $f_{p,-10}$ (see Lemma \ref{lem:RpPP}).  It is through this function $R_p$ that $\mathcal{F}_{\alpha,\delta}$ is determined in \eqref{eqn:Fadfinal}.

\section*{Acknowledgements}
The authors thank Yujie Xu for pointing out some typos in an earlier version.

\section{Preliminaries}
Here we give some preliminary definitions of the objects considered in this paper.   
\subsection{Harmonic weak Maass forms and mock modular forms}
Firstly, a weight $\kappa\in\Z$ \begin{it}harmonic weak Maass form\end{it} on $\SL_2(\Z)$ is a real analytic function on $\H$ which satisfies weight $\kappa$ modularity for $\SL_2(\Z)$, is annihilated by the weight $\kappa$ hyperbolic Laplacian 
$$
\Delta_{\kappa}:=-y^2\left(\frac{\partial^2}{\partial x^2} + \frac{\partial^2}{\partial y^2}\right) +i\kappa y \left(\frac{\partial}{\partial x}+ i\frac{\partial}{\partial y}\right),
$$
where $z=x+iy\in\H$, and grows at most linear exponentially towards $i\infty$.  There are natural subspaces $M_{\kappa}^!$ of weakly holomorphic modular forms, $M_{\kappa}$ of holomorphic modular forms, and $S_{\kappa}$ of cusp forms.  For $N\in\N$, we also denote the corresponding spaces of forms which are modular on $\Gamma_0(N)$ (elements of $\SL_2(\Z)$ whose bottom-left entry is divisible by $N$) by $M_{\kappa}^!(N)$, $M_{\kappa}(N)$, and $S_{\kappa}(N)$.  

Since a harmonic weak Maass form is fixed by $T:=\left(\begin{smallmatrix}1&1\\ 0 &1\end{smallmatrix}\right)$, it has a Fourier expansion of the type 
$$
\sum_{n\in\Z} a_n(y) q^n,
$$
where we note that $a_n(y)$ may depend on $y$.  As mentioned in the introduction, the Fourier expansions of harmonic weak Maass forms have natural decomposition into \begin{it}holomorphic parts\end{it} and a \begin{it}non-holomorphic parts\end{it}.  Specifically, we expand a harmonic weak Maass form $M$ as 
$$
M(z)=\sum_{n\gg -\infty} a_M^+(n) q^n + a_M^-(0)y^{1-\kappa} + \sum_{\substack{n\ll -\infty\\ n\neq 0}} a_M^-(n) \Gamma\left(1-\kappa; 4\pi n y\right)q^n.
$$
We then call 
$$
M^+(z):=\sum_{n\gg -\infty} a_M^+(n) q^n
$$
the holomorphic part or \begin{it}mock modular form\end{it} associated to $M$, and 
\begin{equation}\label{eqn:M-def}
M^-(z):=a_M^-(0)y^{1-\kappa} + \sum_{\substack{n\ll -\infty\\ n\neq 0}} a_M^-(n) \Gamma\left(1-\kappa; 4\pi n y\right)q^n,
\end{equation}
where $\Gamma(k;y):=\int_0^y t^{k-1}e^{-t}dt$ is the \begin{it}incomplete gamma function\end{it}.  Throughout this paper, we only consider those forms where $a_M^-(n)=0$ for $n\geq 0$.  Since $\xi_{\kappa}$ annihilates the holomorphic part, one can compute that these are precisely the harmonic weak Maass forms for which 
$$
\xi_{\kappa}(M)=\xi_{\kappa}\left(M^-\right)=\sum_{n<0}(4\pi n)^{1-\kappa} \overline{a_{M}^-(n)} q^{-n}\in S_{2-\kappa}
$$
is a cusp form.  Noting that $\Gamma\left(1-\kappa; 4\pi n y\right)q^n$ decays for $n<0$, the singularities of $M$ towards $i\infty$ are all in the holomorphic part, and we call 
$$
\sum_{n<0} a_M^+(n) q^n
$$
the \begin{it}principal part\end{it} of $M$.  For $\kappa\leq 0$, there is a unique harmonic weak Maass forms with any arbitrary principal part, which can be constructed, for example via what are known as \begin{it}Poincar\'e series\end{it}.  The $n$th Poincar\'e series $F(1,\kappa;z)$ corresponds to the (unique) harmonic weak Maass form with principal part $q^{-n}$.  

As defined in \cite{BruinierOnoRhoades}, we say that a weight $\kappa$ harmonic weak Maass form $M$ is \begin{it}good for\end{it} the cusp form $g\in S_{2-\kappa}$ if the following conditions are satisfied:
\begin{itemize}
\item[(i)] The principal part of $M$ at the cusp $\infty$ belongs to $K_g[q^{-1}]$, where $K_g$ is the field generated by the coefficients of $g$.
\item[(ii)] We have that $\xi_{2-k}(M)=\|g\|^{-2}g$.
\end{itemize}

In addition to the differential operator $\xi_{\kappa}$, the holomorphic differential operator $D^{1-\kappa}$ with $D:=\frac{1}{2\pi i } \frac{\partial}{\partial z} = q\frac{d}{dq}$ (and $1-\kappa>0$) naturally appears in the theory of harmonic weak Maass forms.  The operator $D^{1-\kappa}$ annihilates every term in \eqref{eqn:M-def} except for then $n=0$ term (giving a constant).  Since one directly obtains
$$
D^{1-\kappa}\left(M^+(z)\right) = \sum_{n\gg -\infty} n^{1-\kappa}a_M^+(n) q^n
$$
and $D^{1-\kappa}$ sends weight $\kappa$ harmonic weak Maass forms to weight $2-\kappa$ weakly holomorphic modular forms due to Bol's identity \cite{Bol}, we see a direct relation between the coefficients of $M^+$ and its image under $D^{1-\kappa}$.  In particular, working backwards from a weight $2-\kappa$ cusp form $g$ and denoting the $n$th Fourier coefficient of $g$ by $a_g(n)$, the \begin{it}(holomorphic) Eichler integral\end{it}
\begin{equation}\label{Egdefeqn}
E_g(z):=\sum_{n\geq 1} n^{\kappa-1} a_g(n) q^n
\end{equation}
is a pre-image of $g$ under $D^{1-\kappa}$ (and actually turns out to be part of a more general harmonic weak Maass form where the singularities lie in the non-holomorphic part).

\subsection{$p$-adic modular forms}
Let $p$ be a prime and fix an algebraic closure $\overline{\Q}_p$ of $\Q_p$ along with an embedding $\iota: \ \overline{\Q} \hookrightarrow \overline{\Q}_p$.  We let $\Qp$ denote the $p$-adic closure of $\overline{\Q}_p$ and normalize the $p$-adic order so that $\ord_p(p)=1$.  We do not distinguish between algebraic numbers and their images under $\iota$.  In particular, for algebraic numbers $a,b \in \overline{\Q}$ we write $a \equiv b \pmod{p^m}$ if $\ord_p(\iota(a-b)) \geq m$.  For a formal power series $H(q)=\sum_{n\in \Z} a(n) q^n \in \Qp[[q,q^{-1}]]$, we write $H \equiv 0 \pmod {p^m}$ if $\displaystyle \inf_{n\in \Z} \left(\ord_p(a(n))\right) \geq m$.

In this paper, a \begin{it}$p$-adic modular form\end{it} of level $N$ will refer to a formal power series $H(q)={\displaystyle \sum_{n\geq -t}} a(n) q^n$ with coefficients in $\Qp$ satisfying the following condition:  for every $m\in \N$ there exists a weakly holomorphic modular form ($q:=e^{2\pi i z}$ for $z\in \mathbb{H}$) $H_m(z)={\displaystyle \sum_{n\geq - t}} b_m(n) q^n\in M_{\ell_m}^!(N)$, with algebraic coefficients $b_m(n)\in \overline{\Q}$, which satisfies the congruence
\begin{equation}
H\equiv H_m\pmod{p^m}.
\end{equation}
If $\ell_m=\ell$ is constant, then we will refer to $H$ as a $p$-adic modular form of weight $\ell$, level $N$.

The main result in \cite{BGK} was the proof of the existence of $p$-adic modular forms of level $Np$ constructed from mock modular forms and Eichler integrals of level $N$.  We only consider the case $N=1$ here and briefly recall this connection.

In Theorem 1.1 of \cite{GKO}, Guerzhoy, Kent, and Ono showed that, for $M$ good for the Hecke eigenform $g\in S_k$,
$$
\FF_{a_M^+(1)}:=M^+ - a_M^+(1) E_g
$$
has coefficients in $K_g$.  Hence for every $\gamma\in \Qp$, 
\begin{equation}\label{FFadefeqn}
\FF_{\alpha}:=M^+ - \alpha E_g:= \FF_{a_M^+(1)}- \gamma E_g
\end{equation}
is an element of $\Qp[[q,q^{-1}]]$.  Here $\alpha:=a_M(1)+\gamma\in A_M$, where
$$
A_M:=a_M(1)+\Qp:=\left\{ a_M(1)+x:x\in \Qp\right\}
$$
denotes a set of formal sums.  They then investigated congruences between $\FF_{\alpha}|U_p$ and $E_g$, where $U_p$ is the usual $U$-operator defined on formal power series by 
$$
\left(\sum_{n\in\Z}a(n) q^n\right)\bigg| U_p:=\sum_{n\in\Z}a(np) q^{np}.
$$
They showed that, up to possibly one exceptional $\alpha$, the $p$-adic limit
$$
\lim_{n\to\infty} D^{k-1}\left(\FF_{\alpha}|U_p^n\right)
$$
is essentially $g$.  

In \cite{BGK}, the case of the exceptional $\alpha$ was instead investigated, and the construction was extended to consider formal power series of the type
\begin{equation}\label{FFaddefeqn}
\FF_{\alpha,\delta}:=\FF_{\alpha}-\delta\left(E_{g}-\beta E_{g|V_p}\right),
\end{equation}
where $\delta\in \Qp$.  Here $V_p$ is the usual $V$-operator defined on formal power series by 
$$
\left(\sum_{n\in\Z}a_n q^n\right)\bigg| V_p:=\sum_{n\in\Z}a_n q^{np}.
$$
Theorem 1.1 (2) of \cite{BGK} then states that there is exists unique pair $(\alpha,\delta)\in A_M\times \Qp$ (here $\alpha$ is the exceptional $\alpha$ from \cite{GKO}) such that $\mathcal{F}_{\alpha,\delta}$ is a $p$-adic modular form of weight $2-k$.  

It was noted in \cite{BGK} that, after multiplying by an appropriate power of $\Delta$, the $p$-adic modular form of weight $2-k$ would satisfy congruences with classical modular forms.  It is the goal of this paper to investigate such congruences explicitly.

\section{A Maass-Poincar\'e series}

For a harmonic weak Maass form $M$ which is good for a Hecke eigenform $g\in S_k$ and $n\in\N$, we define
$$
R_{n}(z):=M|_{2-k}T(n) - n^{1-k}a_g(n) M.
$$
The function $R_n$ is a weakly holomorphic modular form of weight $2-k$.  
We are particularly interested in the case that $n=p$ is prime and $M(z)=F(1,-10;z)$ is the first (Maass-)Poincar\'e series of weight $2-k=-10$ for $\SL_2(\Z)$. 
\begin{lemma}\label{lem:RpPP}
Let $p$ be a prime and $M(z)=F(1,-10;z)$.  Then one has 
$$
R_p(z)=p^{1-k} f_{p,-10}(z).
$$
Furthermore, the principal part of $R_p$ equals $p^{1-k} q^{-p}-p^{1-k}\tau(p)q^{-1}+O(1)$.
\end{lemma}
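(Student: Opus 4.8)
The plan is to identify $R_p$ by computing its principal part explicitly and then invoking the rigidity of weakly holomorphic modular forms in negative weight. Since the text preceding the lemma already records that $R_p\in M_{2-k}^!=M_{-10}^!$, its non-holomorphic part vanishes and $R_p$ coincides with its holomorphic part; moreover any two elements of $M_{-10}^!$ with the same principal part differ by a holomorphic modular form of weight $-10<0$, which must vanish by the valence formula. Thus it suffices to pin down the principal part of $R_p$ and compare it with that of $f_{p,-10}$.

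First I would compute the principal part of $R_p=M|_{2-k}T(p)-p^{1-k}\tau(p)M$ directly from the $q$-expansion (note $a_\Delta(p)=\tau(p)$). Writing $M^+=q^{-1}+\sum_{n\geq 0}a_M^+(n)q^n$, where the principal part is exactly $q^{-1}$ because $M$ has a simple pole at $i\infty$, the weight $2-k$ Hecke operator acts on coefficients so that the $q^n$-coefficient of $M^+|_{2-k}T(p)$ equals $a_M^+(pn)+p^{1-k}a_M^+(n/p)$, with the convention $a_M^+(n/p):=0$ unless $p\mid n$ (here $p^{(2-k)-1}=p^{1-k}$). For $n<0$ the term $a_M^+(pn)$ vanishes, since $pn\leq -p<-1$ and $M^+$ has no negative-index coefficient below $-1$, while $a_M^+(n/p)$ is nonzero only when $n/p=-1$, that is $n=-p$, contributing $p^{1-k}$. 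Hence the principal part of $M|_{2-k}T(p)$ is $p^{1-k}q^{-p}$, and subtracting $p^{1-k}\tau(p)M$ adds $-p^{1-k}\tau(p)q^{-1}$. This yields principal part $p^{1-k}q^{-p}-p^{1-k}\tau(p)q^{-1}+O(1)$, supported only on $q^{-p}$ and $q^{-1}$, which is the ``Furthermore'' assertion.

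It remains to prove the identity $R_p=p^{1-k}f_{p,-10}$. By the Duke--Jenkins normalization \eqref{eqn:fmexplicit} and $d=\dim_{\C}S_{12}=1$, the form $f_{p,-10}=q^{-p}+O(q^{-1})$ is weakly holomorphic of weight $-10$ with principal part $q^{-p}+c\,q^{-1}$ for a single constant $c$. I would then set $h:=R_p-p^{1-k}f_{p,-10}\in M_{-10}^!$; the $q^{-p}$-terms cancel, so its principal part is the scalar multiple $-p^{1-k}(\tau(p)+c)\,q^{-1}$. The crux is to show this multiple vanishes, which is exactly the statement that $q^{-1}$ cannot occur alone as the leading principal part of a weight $-10$ weakly holomorphic form. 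This is the obstruction that the principal part of any element of $M_{-10}^!$ must pair trivially (via $\sum_{n<0}c(n)a_g(-n)$, the Bruinier--Funke pairing, equivalently the vanishing residue of $R_p\,\Delta\,dz$) with every cusp form $g\in S_{12}$; since $S_{12}=\C\Delta$ with $\tau(1)=1\neq 0$, equivalently $f_{1,-10}=0$, the $q^{-1}$-coefficient of $h$ is forced to be $0$. Consequently $c=-\tau(p)$ and $h$ has empty principal part, so $h$ is a holomorphic modular form of negative weight, whence $h=0$ and $R_p=p^{1-k}f_{p,-10}$. The expected main obstacle is precisely this last step fixing the $q^{-1}$-coefficient; the Hecke bookkeeping and the negative-weight vanishing are routine.
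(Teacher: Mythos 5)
Your proposal is correct and follows essentially the same route as the paper: compute the principal part of $R_p$ coefficientwise via the weight $2-k$ Hecke action on the holomorphic part, then identify $R_p$ with $p^{1-k}f_{p,-10}$ by the rigidity of weakly holomorphic forms of negative weight. The only difference is cosmetic: where the paper compresses the final step into the statement $f_{1,-10}=0$ from the Duke--Jenkins construction, you unwind exactly why that vanishing holds (the Bruinier--Funke/residue pairing of the principal part with $\Delta$ forces the $q^{-1}$-coefficient to be zero), which is a correct and slightly more self-contained rendering of the same argument.
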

\begin{proof}
Actually, we will prove the second statement directly, and then the first statement will follow by uniqueness, since we know that $R_p\in M_{-10}^!$.

We look at the action of the Hecke operator on the coefficients of the holomorphic part, since we know that the non-holomorphic part cancels from the fact that $\Delta$ is a Hecke eigenform (this being the proof that $R_p\in M_{-10}^!$).  Denote the coefficients of the holomorphic part of $F(1,-10;z)$ by $a_n$ and the coefficients of $R_p$ by $b_n$.  For $n<0$ we have 
$$
a_n = \begin{cases} 1 & n=1,\\
0 &\text{otherwise}.
\end{cases}
$$
By the action of the weight $2-k$ Hecke operator coefficientwise (cf. p. 21 of \cite{OnoBook}), we have
$$
b_n = a(pn) +p^{1-k}a\left(\frac{n}{p}\right) - p^{1-k}\tau(p) a(n).
$$
with $a(s)=0$ by convention for every $s\in \Q\setminus\Z$.  Therefore whenever $n\notin\{1,p\}$ one sees immediately that $b_n=0$.  For $n=p$ one has 
$$
b_p=p^{1-k},
$$
while for $n=1$ one has 
$$
b_1=-p^{1-k}\tau(p).
$$
Therefore the first statement follows by uniqueness, since $p^{k-1} R_p - f_{p,-10}(z)=O(q^{-1})$, and is a weakly holomorphic modular form.  But then 
$$
p^{k-1} R_p - f_{p,-10}(z)=f_{1,-10}(z) = 0.
$$
\end{proof}

For the case at hand, the following lemma, which follows directly by Lemma \ref{lem:RpPP}, proves useful for computing Fourier coefficients.
\begin{lemma}
We have
$$
R_3(z)=3^{-11}\frac{E_{14}(z)}{\Delta^2(z)} \left(j(z)-768\right).
$$
\end{lemma}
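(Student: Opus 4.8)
The plan is to combine the explicit identity of Lemma~\ref{lem:RpPP} with the Duke--Jenkins description \eqref{eqn:fmexplicit} of the basis element $f_{3,-10}$. Specializing Lemma~\ref{lem:RpPP} to $p=3$ (so that $2-k=-10$, i.e.\ $k=12$) gives at once
$$
R_3(z)=3^{1-k}f_{3,-10}(z)=3^{-11}f_{3,-10}(z),
$$
so the entire task reduces to identifying $f_{3,-10}$ with $\frac{E_{14}}{\Delta^2}\left(j-768\right)$.

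For this I would unwind \eqref{eqn:fmexplicit} with $2-k=-10$ and $m=3$. Since $S_{12}=\C\Delta$, we have $d=\dim_{\C}S_{12}=1$, hence $m=3>d$ and the first case of the formula applies; moreover $\Delta^{-d-1}=\Delta^{-2}$. The weight index $k'$ is the unique element of $\{0,4,6,8,10,14\}$ with $k'\equiv 2-k\equiv 2\pmod{12}$, namely $k'=14$, while the generalized Faber polynomial $F_3$ has degree $m-d-1=1$. Thus $f_{3,-10}=E_{14}\,\Delta^{-2}\,F_3(j)$ with $F_3$ of degree one in $j$, and it remains only to show $F_3(j)=j-768$.

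The last step is to pin down the two coefficients of $F_3(X)=aX+b$ using the normalization $f_{3,-10}=q^{-3}+O(q^{-1})$ built into \eqref{eqn:fmexplicit}. From $\Delta=q+O(q^2)$ one gets $\Delta^{-2}=q^{-2}\left(1+48q+O(q^2)\right)$; together with $E_{14}=1-24q+O(q^2)$ and $j=q^{-1}+744+O(q)$, the leading term of $E_{14}\,\Delta^{-2}\,F_3(j)$ is $a\,q^{-3}$, which forces $a=1$. Demanding that the coefficient of $q^{-2}$ vanish then produces the single linear equation $(744+b)+48-24=0$, i.e.\ $b=-768$, so $F_3(j)=j-768$ and the claimed formula for $R_3$ follows. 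I expect no genuine obstacle here: the argument is a short uniqueness-plus-bookkeeping computation, and the only care needed is to identify $k'=14$, $d=1$, and $\deg F_3=1$ correctly, and to track the low-order coefficients $48$ (from $\Delta^{-2}$) and $-24$ (from $E_{14}$) that combine with the constant $744$ from $j$ to give $768$.
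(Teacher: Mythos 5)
Your proposal is correct and follows exactly the route the paper intends: the paper offers no written proof beyond noting that the lemma ``follows directly by Lemma~\ref{lem:RpPP},'' i.e.\ from $R_3=3^{-11}f_{3,-10}$ combined with the Duke--Jenkins formula \eqref{eqn:fmexplicit}, which is precisely what you carry out. Your bookkeeping is accurate as well ($d=1$, $k'=14$, $\deg F_3=1$, and the $q^{-2}$-coefficient condition $744+b+48-24=0$ forcing $F_3(j)=j-768$), so nothing is missing.
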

\section{Connection to mock modular forms}

Here we compute the $p$-adic modular form  $\mathcal{F}_{\alpha,\delta}$ without actually computing $\alpha$ and $\delta$.  There are actually 2 additional related $p$-adic modular forms $\mathcal{F}_{\alpha}^*$ and $\mathcal{F}_{\alpha,\delta}^*$; although we only explicitly address $\mathcal{F}_{\alpha,\delta}$ in this paper, we work out the connection for the other two $p$-adic modular forms here for the interested reader.  Hence for a formal power series $\mathcal{F}$ with coefficients in $\widehat{\overline{\Q_p}}$, we define
$$
\mathcal{F}^*:=\mathcal{F}-p^{1-k}\beta'\mathcal{F}|V_p.
$$
Here $\beta$ and $\beta'$ are roots of the Hecke polynomial 
$$
x^2-\lambda_p x +\chi(p)p^{k-1}
$$ 
(i.e., $\lambda_p=\beta+\beta'$ and $\beta\beta'=p^{k-1}$) with $\ord_p(\beta)<\ord_p(\beta')$.   The following lemma is a refinement of the results in Theorem 1.1 of \cite{BGK} (only the third part is written differently than what was contained in \cite{BGK}).
\begin{lemma}\label{lem:Fadfinal}
Let $M$ be a harmonic weak Maass form which is good for a Hecke eigenform $g$ and denote its $\alpha$-correction and $(\alpha,\delta)$-corrections by $\mathcal{F}_{\alpha}$ and $\mathcal{F}_{\alpha,\delta}$.
\noindent

\noindent
\begin{enumerate}
\item
There is a unique $\alpha\in \R+\widehat{\overline{\Q_p}}$ such that $\mathcal{F}_{\alpha}^*$ is a $p$-adic modular form.  For this $\alpha$, we have 
$$
\mathcal{F}_{\alpha}^*=-\beta^{-1}p^{k-1}\sum_{\ell=0}^{\infty} \beta^{-\ell}p^{\ell(k-1)}R_p\Big|U_{p^{\ell}}.
$$
\item
There is a unique $\alpha\in \R+\widehat{\overline{\Q_p}}$ and $\delta\in \widehat{\overline{\Q_p}}$ such that $\mathcal{F}_{\alpha,\delta}^*|U_p$ is a $p$-adic modular form.  For this choice of $\alpha$ and $\delta$, we have

\item
If $\ord_p(\beta)<\ord_p(\beta')\neq k-1$, then there is a unique $\alpha\in \R+\widehat{\overline{\Q_p}}$ and $\delta\in \widehat{\overline{\Q_p}}$ such that $\mathcal{F}_{\alpha,\delta}$ is a $p$-adic modular form.  For this choice of $\alpha$ and $\delta$, we have
\begin{equation}\label{eqn:Fadfinal}
\mathcal{F}_{\alpha,\delta}=\frac{\beta}{\beta'-\beta} \sum_{n=2}^{\infty}\left(\beta'^n-\beta^n\right)R_p\Big|U_{p^{n-1}}.
\end{equation}
\end{enumerate}
\end{lemma}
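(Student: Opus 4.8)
My plan is to take the existence and uniqueness of $(\alpha,\delta)$ directly from Theorem~1.1(2) of \cite{BGK}---noting that the hypothesis $\ord_p(\beta')\neq k-1$ is the same as $\ord_p(\beta)=v>0$, so we are in the admissible range---and to spend the real effort on the closed form \eqref{eqn:Fadfinal}. The idea is to reach \eqref{eqn:Fadfinal} from a single second-order identity in $U_p$, so that neither $\alpha$ nor $\delta$ is ever computed.

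First I would record two input relations. Because $E_g$ is a $D^{k-1}$-preimage of the eigenform $g$, the defining relation $R_p=M|_{2-k}T(p)-p^{1-k}\lambda_p M$ together with $T(p)=U_p+p^{1-k}V_p$ simplifies, on holomorphic parts, to
$$
\FF_\alpha\big|U_p=R_p+p^{1-k}\lambda_p\,\FF_\alpha-p^{1-k}\,\FF_\alpha\big|V_p .
$$
A short coefficient computation then shows that the Eichler correction is a $U_p$-eigenform,
$$
\big(E_g-\beta E_{g|V_p}\big)\big|U_p=\beta^{-1}\big(E_g-\beta E_{g|V_p}\big),
$$
so that $E_g-\beta E_{g|V_p}$ lies in the kernel of $1-\beta U_p$.

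The heart of the argument is the computation of $(1-\beta U_p)(1-\beta'U_p)\FF_{\alpha,\delta}$. Writing $(1-\beta U_p)(1-\beta'U_p)=1-\lambda_p U_p+p^{k-1}U_p^2$ and substituting the two relations above, every term in $\FF_\alpha$ and $\FF_\alpha|V_p$ cancels, and the three contributions of the Eichler correction telescope to zero via $-1+\lambda_p\beta^{-1}-p^{k-1}\beta^{-2}=0$; what survives is the clean, $\delta$-free identity
$$
(1-\beta U_p)(1-\beta' U_p)\,\FF_{\alpha,\delta}=p^{k-1}\,R_p\big|U_p .
$$
Inverting the two commuting factors and expanding by the operator partial fraction $\frac{1}{(1-\beta' u)(1-\beta u)}=\frac{1}{\beta'-\beta}\big(\frac{\beta'}{1-\beta' u}-\frac{\beta}{1-\beta u}\big)$ produces a series whose coefficient of each $R_p|U_{p^{\,n}}$ is the antisymmetric quotient $(\beta'^{\,n}-\beta^{\,n})/(\beta'-\beta)$; identifying the large-root half with $\FF_\alpha^*$ through part~(1), this reproduces the antisymmetric structure of \eqref{eqn:Fadfinal}, the remaining matching of constants and indices being routine bookkeeping via $\beta\beta'=p^{k-1}$.

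The one genuinely non-formal point---and the step I expect to be the main obstacle---is the legitimacy of inverting $1-\beta U_p$, i.e.\ the $p$-adic convergence of the small-root series $\sum_n\beta^n R_p|U_{p^{\,n}}$. Since $\ord_p(\beta)=v$ is small, I would have to prove $\ord_p\!\big(\beta^n R_p|U_{p^{\,n}}\big)\to\infty$, which demands quantitative lower bounds on the $p$-adic valuations of the Fourier coefficients of $R_p|U_{p^{\,n}}=p^{1-k}f_{p,-10}|U_{p^{\,n}}$. This is precisely the input that forces the range $0<v<(k-1)/2$ and is mirrored in the exponent $\ell_1$ of Theorem~\ref{thm:main}. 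Conceptually, the kernel line $E_g-\beta E_{g|V_p}$ of $1-\beta U_p$ is exactly the source of the free parameter $\delta$: the displayed identity holds for every $\delta$, and only the unique $\delta$ of \cite{BGK} selects the convergent---hence $p$-adic modular---solution, every other choice differing from it by this kernel direction and failing to converge. Since the closed form is visibly a $p$-adic limit of the weakly holomorphic forms $R_p|U_{p^{\,n}}$ and agrees with $\FF_{\alpha,\delta}$ on principal parts, the uniqueness from \cite{BGK} then completes the identification.
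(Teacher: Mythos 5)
Your route is genuinely different from the paper's---the paper never forms a second-order equation, but instead imports from \cite{BGK} (equations (3.13)--(3.21) and Proposition~3.3) two \emph{first-order} geometric-series inversions, one in $\beta'$ for $\mathcal{F}^*_{\alpha,\delta}$ and one in $\beta$ for $\mathcal{F}_{\alpha,\delta}$, and composes them into a double sum---and your two input relations and the resulting identity
$(1-\beta U_p)(1-\beta' U_p)\,\mathcal{F}_{\alpha,\delta}=p^{k-1}R_p|U_p$
are all correct (I verified them coefficientwise). But the step ``inverting the two commuting factors'' hides the genuine gap. The kernel of $(1-\beta U_p)(1-\beta' U_p)$ on formal series over $\Qp$ is infinite-dimensional: one may prescribe $a(n)$ freely for $p\nmid n$ and propagate along $n,np,np^2,\dots$ with eigenvalue $\beta^{-1}$ or $\beta'^{-1}$. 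Moreover $E_g$ itself is annihilated by the composite operator (not only $E_g-\beta E_{g|V_p}$; this is why your identity is $\alpha$-free as well as $\delta$-free), so the \emph{entire} two-parameter family $\{\mathcal{F}_{\alpha,\delta}\}$ satisfies the same equation. Consequently ``uniqueness from \cite{BGK}'' cannot finish the identification as you propose: to apply it you must first show that your particular solution equals $\mathcal{F}_{\alpha_0,\delta_0}$ for \emph{some} pair, and agreement of principal parts does not rule out a kernel element with trivial principal part. The paper closes exactly this hole with BGK's characterization that $p$-adic modularity of $\mathcal{F}_{\alpha,\delta}$ is equivalent to the vanishing of the geometric tails ((3.16)/(3.17) of \cite{BGK}, i.e.\ of terms like $\beta^{L}\mathcal{F}_{\alpha,\delta}|U_{p^L}$), which is what legitimizes the inversions; your argument needs this input and currently lacks it. (By contrast, the convergence issue you flag as the main obstacle is easy: $p^{k-1}R_p$ has integral coefficients by Lemma~\ref{lem:RpPP} and $U_{p^n}$ only selects a subset of coefficients, so all valuations are uniformly $\geq 1-k$ and $\ord_p(\beta)=v>0$ already gives convergence; no quantitative coefficient bounds are required.)

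Second, the ``routine bookkeeping'' is not routine, and in fact your method, carried out honestly, does \emph{not} reproduce \eqref{eqn:Fadfinal} as printed. The partial-fraction inversion gives
\begin{equation*}
p^{k-1}\left[(1-\beta U_p)(1-\beta' U_p)\right]^{-1}R_p\Big|U_p
=\frac{p^{k-1}}{\beta'-\beta}\sum_{m=1}^{\infty}\left(\beta'^{\,m}-\beta^{\,m}\right)R_p\Big|U_{p^{m}},
\end{equation*}
an index-shifted variant of \eqref{eqn:Fadfinal}; the printed series exceeds this by $\sum_{n\geq 2}\beta^{n}R_p|U_{p^{n-1}}$. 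Your own identity decides between the two: by \eqref{eqn:Rpellinfty} only the $n=2$ term contributes to the principal part at $i\infty$, so the printed right-hand side of \eqref{eqn:Fadfinal} begins $\beta(\beta+\beta')p^{1-k}q^{-1}=(1+\beta/\beta')q^{-1}$, whereas $\mathcal{F}_{\alpha,\delta}=q^{-1}+O(1)$ and the shifted series begins $\beta\beta'p^{1-k}q^{-1}=q^{-1}$; equivalently, applying $(1-\beta U_p)(1-\beta'U_p)$ to the printed series yields $\beta\lambda_p R_p|U_p-\beta p^{k-1}R_p|U_{p^2}$ rather than $p^{k-1}R_p|U_p$. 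The source is an off-by-one in the paper's own reindexing: with $n=j+\ell$ and $j,\ell\geq 1$, the inner sum in \eqref{eqn:Fadrewrite} should run over $\ell=1,\dots,n-1$, giving $\frac{\beta\beta'}{\beta'-\beta}\left(\beta'^{\,n-1}-\beta^{\,n-1}\right)$ as the coefficient of $R_p|U_{p^{n-1}}$. So your asserted conclusion (``reproduces the antisymmetric structure of \eqref{eqn:Fadfinal}'') is precisely where the write-up fails: a correct execution of your approach proves the index-shifted formula and simultaneously shows the printed one cannot hold, while deferring the matching to ``bookkeeping'' papers over this real discrepancy.
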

\begin{remarks}
\noindent

\noindent
\begin{enumerate}
\item
To obtain $p$-adic congruences for $\mathcal{F}_{\alpha}^*$, we only need to compute congruences for 
$$
\beta^{-\ell}p^{\ell(k-1)}R_{p}|U_{p^{\ell}}
$$
with $\ell \geq 0$.  Similarly, to obtain $p$-adic congruences for $\mathcal{F}_{\alpha,\delta}$, we need to investigate $p$-adic congruences for a constant multiple of $R_{p}|U_{p^{\ell}}$.  This is what is undertaken in this paper.

\item
The series on the right-hand side of \eqref{eqn:Fadfinal} only necessarily converges (as a $p$-adic limit) in the case that $\ord_p\left(\lambda_p\right)>0$, or in other words $\ord_p(\beta')\neq k-1$.  This matches one of the conditions given in Theorem 1.1 (2) of \cite{BGK}, and we see that the rate of convergence is higher as $\ord_p(\lambda_p)$ increases.  In the case of weight $12$ investigated in this paper, it is hence interesting to investigate cases where $p\mid \tau(p)$, i.e., non-ordinary primes.  The list of such primes currently known is $p\in\{2,3,5,7,2411,7758337633\}$, with the last of this list added by Lygeros and Rozier \cite{LR}.  
\end{enumerate}
\end{remarks}
\begin{proof}
As in \cite{BGK}, we define the following operator on formal power series
$$
B_p:=-\beta p^{1-k}\left(1-\beta'p^{1-k}V_p\right)\left(1-\beta^{-1}p^{k-1}U_p\right).
$$
By (3.20) of \cite{BGK}, rewriting $B_p$ with (3.3) of \cite{BGK}, we have 
$$
M\Big|B_p= R_p(z)\in M_{2-k}^!.
$$
Recalling that $E_g$ is annihilated by $B_p$ by Lemma 3.2 of \cite{BGK}, we conclude that, for every $\alpha$,
$$
\mathcal{F}_{\alpha}\Big|B_p=R_p.
$$
\vspace{.01in}

\noindent
(1) By Proposition 3.3 (2) of \cite{BGK}, $\mathcal{F}_{\alpha}^*$ is a $p$-adic modular form if and only if the first term in (3.13) of \cite{BGK} vanishes.   However, if the first term vanishes, then we have 
$$
\mathcal{F}_{\alpha}^*=-\beta^{-1}p^{k-1}\sum_{\ell=0}^{\infty} \beta^{-\ell}p^{\ell(k-1)}R_p\Big|U_{p^{\ell}}.
$$
\vspace{.01in}

\noindent
(2)  Similarly, by (3.18) of \cite{BGK} and Proposition 3.3 (1) of \cite{BGK}, we have that $\mathcal{F}_{\alpha,\delta}^*|U_p$ is a $p$-adic modular form if and only if 
$$
\mathcal{F}_{\alpha,\delta}^*|U_p =-\sum_{\ell=1}^{\infty} \beta^{-\ell}p^{\ell(k-1)} R_p\big|U_{p^{\ell}},
$$
where we have plugged in (see the displayed formula before (3.21) in \cite{BGK})
$$
\widetilde{W}:=\mathcal{F}_{\alpha,\delta}\Big|B_pU_p = R_p\Big|U_p.
$$
\vspace{.01in}

\noindent
(3)  
Plugging in  (3.19) of \cite{BGK} and then (3.18) of \cite{BGK} and noting that $\mathcal{F}_{\alpha,\delta}$ is a $p$-adic modular form if and only if (3.16) and (3.17) of \cite{BGK} vanish, we obtain that the unique choice of $\alpha,\delta$ for which $\mathcal{F}_{\alpha,\delta}$ is a $p$-adic modular form satisfies
$$
\mathcal{F}_{\alpha,\delta}= -\sum_{\ell=0}^{\infty} \beta'^{-\ell-1}p^{(\ell+1)(k-1)}\mathcal{F}_{\alpha,\delta}^*\big|U_{p^{\ell+1}}=\sum_{\ell=1}^{\infty} \beta'^{-\ell}p^{\ell(k-1)}\sum_{j=1}^{\infty}\beta^{-j}p^{j(k-1)} R_p\Big|U_{p^{j+\ell-1}}.
$$
Denoting $n:=j+\ell$ and rewriting the sums, this simplifies as 
\begin{equation}\label{eqn:Fadrewrite}
\mathcal{F}_{\alpha,\delta}=\sum_{n=2}^{\infty} \sum_{\ell=1}^{n} \beta'^{-\ell} \beta^{\ell-n} p^{n(k-1)}R_p\Big|U_{p^{n-1}}.
\end{equation}
Using $p^{k-1}=\beta\beta'$, we may rewrite this as 
$$
\sum_{n=2}^{\infty} \sum_{\ell=1}^{n} \beta'^{n-\ell} \beta^{\ell} R_p\Big|U_{p^{n-1}}.
$$
Rewriting the geometric series, we have
$$
\sum_{\ell=1}^{n} \beta'^{n-\ell} \beta^{\ell} = \frac{\beta'^{n+1}}{\beta'-\beta}\left(1-\left(\frac{\beta}{\beta'}\right)^{n+1} - 1 + \frac{\beta}{\beta'}\right)=\frac{\beta}{\beta'-\beta}\left(\beta'^n-\beta^n\right).
$$
Thus \eqref{eqn:Fadrewrite} implies \eqref{eqn:Fadfinal}.

\end{proof}

\section{Fourier expansion of $R_p\big|U_{p^\ell}$}

We now return to the specific case that $M=F(1,-10;z)$ and consider the function $R_p\big|U_{p^{\ell}}$, where $\ell\in\N$.  This is a weakly holomorphic modular form of weight $-10$ on $\Gamma_0(p)$.  We next determine its principal part at the two cusps $0$ and $i\infty$ of $\Gamma_0(p)$.  By directly inspecting the Fourier expansion, as $z\to i\infty$ we have 
\begin{equation}\label{eqn:Rpellinfty}
R_{p}(z)\Big|U_{p^{\ell}} = \begin{cases} p^{1-k}q^{-1}+O(1)&\text{if }\ell=1,\\ O(1) &\text{if }\ell>1.\end{cases}
\end{equation}
In order to state the principal part at $0$, we require a little notation and a short lemma.  For $0\leq r<p^{\ell}$, let $g=g_r:=\gcd\left(r,p^{\ell}\right)$ and suppose that $b\in\Z$ is a solution to the equation 
$$
\frac{r}{g} b \equiv 1\pmod{\frac{p^{\ell}}{g}}.
$$
\begin{lemma}\label{lem:RpUpPP}
The principal part of the Fourier expansion of $R_{p}\big|U_{p^{\ell}}$ at $0$ is 
$$
p^{1-k-\ell}\sum_{r\pmod{p^{\ell}}}g_r^{2-k} e^{-\frac{2\pi i g_r nb}{p^{\ell}}} q^{-\frac{g_r^2}{p^{\ell-1}}}+p^{1-k-\ell}\sum_{r\pmod{p^{\ell}}} g_r^{2-k} e^{-\frac{2\pi i g_r nb}{p^{\ell}}} q^{-\frac{g_r^2}{p^{\ell}}}.
$$
\end{lemma}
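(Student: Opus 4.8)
The plan is to read off the expansion of $R_p\big|U_{p^\ell}$ at the cusp $0$ from its expansion at $i\infty$, which Lemma~\ref{lem:RpPP} completely controls.  First I would write the $U_{p^\ell}$-operator as the averaging operator
$$
\left(R_p\big|U_{p^\ell}\right)(z)=\frac{1}{p^\ell}\sum_{r=0}^{p^\ell-1}R_p\left(\frac{z+r}{p^\ell}\right),
$$
which on $q$-expansions reproduces the coefficient map $\sum a(n)q^n\mapsto\sum a(p^\ell n)q^n$.  To pass to the cusp $0$ I would apply the weight $2-k$ slash by $S=\smatr{0}{-1}{1}{0}$.  Since $\smatr{1}{r}{0}{p^\ell}S=\smatr{r}{-1}{p^\ell}{0}$, each summand becomes $z^{k-2}R_p\left(\frac{rz-1}{p^\ell z}\right)$, so the whole problem reduces to understanding $R_p$ evaluated along the determinant-$p^\ell$ matrices $\smatr{r}{-1}{p^\ell}{0}$.

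The key step is a coset decomposition that exploits the modularity of $R_p\in M_{2-k}^!$ under the full group $\SL_2(\Z)$.  For each $r$ set $g=g_r=\gcd(r,p^\ell)$ and choose, by B\'ezout, integers $u=u_r,v$ with $ur+vp^\ell=g$; then
$$
\smatr{r}{-1}{p^\ell}{0}=\gamma^{-1}T,\qquad\gamma=\smatr{u}{v}{-p^\ell/g}{r/g}\in\SL_2(\Z),\qquad T=\smatr{g}{-u}{0}{p^\ell/g}.
$$
Modularity lets me absorb $\gamma^{-1}$: the transformation of $R_p$ under $\gamma^{-1}$ supplies the automorphy factor $(gz)^{2-k}$ (the relevant denominator $\frac{p^\ell}{g}Tz+u$ simplifying to $gz$), which cancels against the $z^{k-2}=z^{-(2-k)}$ produced by $S$ and leaves
$$
\left(R_p\big|U_{p^\ell}\right)\big|_{2-k}S(z)=\frac{1}{p^\ell}\sum_{r=0}^{p^\ell-1}g_r^{2-k}R_p\left(\frac{g_rz-u_r}{p^\ell/g_r}\right).
$$
Now I insert the $i\infty$-expansion of $R_p$.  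By Lemma~\ref{lem:RpPP} its only negative-index terms are $p^{1-k}q^{-p}$ and $-p^{1-k}\tau(p)q^{-1}$, so only the Fourier indices $-p$ and $-1$ contribute to the principal part at $0$.  The index $-p$ yields the $q$-power $q^{-g_r^2/p^{\ell-1}}$ and the index $-1$ yields $q^{-g_r^2/p^\ell}$, each weighted by $p^{1-k-\ell}g_r^{2-k}$ (the second term additionally carrying the factor $-\tau(p)$) and by the phase $e^{-2\pi i\, n g_r u_r/p^\ell}$ with $n\in\{-p,-1\}$.  Since $u_r\cdot\frac{r}{g_r}\equiv1\pmod{p^\ell/g_r}$, the integer $u_r$ agrees with the $b$ of the statement modulo $p^\ell/g_r$, so $g_r(u_r-b)\equiv0\pmod{p^\ell}$ and the phase collapses to $e^{-2\pi i\, n g_r b/p^\ell}$, producing the two displayed sums.

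I expect the main obstacle to be the bookkeeping in this coset decomposition: producing the upper-triangular representative $T$ with the correct entries, checking that the automorphy factor is exactly $(gz)^{2-k}$ so that the weight factor cancels and precisely $g_r^{2-k}$ survives, and matching the B\'ezout coefficient $u_r$ to the quantity $b$ fixed by $\frac{r}{g}b\equiv1\pmod{p^\ell/g}$.  A secondary subtlety worth recording is that the individual summands carry fractional powers of $q$; these are legitimate because, after summing over $r$, the Ramanujan-type character sums concealed in the phases force the genuinely non-integral powers to cancel, so the total is a bona fide expansion at the cusp $0$ of $\Gamma_0(p)$.
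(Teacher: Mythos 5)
Your proof is correct and follows essentially the same route as the paper: averaging over the $U_{p^\ell}$ cosets, slashing by $S$, factoring each matrix $\smatr{1}{r}{0}{p^\ell}S$ as an element of $\SL_2(\Z)$ times an upper-triangular matrix (your B\'ezout decomposition producing $\gamma^{-1}T$ is exactly the paper's displayed matrix identity, with your $u_r$ playing the role of $b$, and your check that $u_r\equiv b\pmod{p^\ell/g_r}$ makes the phase well-defined), and then inserting the principal part from Lemma~\ref{lem:RpPP}. If anything you are slightly more careful than the paper's final display, which drops the factor $-\tau(p)$ on the $q^{-g_r^2/p^\ell}$ sum and leaves the dummy $n$ in the phases unevaluated.
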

\begin{remarks}
\noindent

\noindent
\begin{enumerate}
\item
Recall that the cusp width at $0$ is $p$, so we need $g_r^2/p^{\ell-2}\in\Z$.
\item
Since $g_r\leq p^{\ell}$, the order of pole is at most $p^{2\ell-(\ell-2)}=p^{\ell+2}$.  This is attained specifically for $r=0$.
\end{enumerate}
\end{remarks}
\begin{proof}
First recall that for any function $f$, $k\in\Z$, and $n\in\N$, 
$$
f(z)|U_{n} = \frac{1}{n}\sum_{r\pmod{n}} f\left(\frac{z+r}{n}\right) = n^{\frac{k}{2}-1}\sum_{r\pmod{n}} f(z)\Big|_k \left(\begin{matrix} 1 & r\\ 0 &n\end{matrix}\right),
$$
where $|_k$ is the usual weight $k$ slash operator.  Furthermore, a direct calculation yields that
$$
\left(\begin{matrix}1 & r\\ 0& p^{\ell}\end{matrix}\right)\left(\begin{matrix}0&-1\\ 1 &0\end{matrix}\right) \left(\begin{matrix}\frac{1}{g_r} & \frac{b}{p^{\ell}} \\ 0 &\frac{g_r}{p^{\ell}}\end{matrix}\right)\in \SL_2(\Z).
$$
Denoting $S:=\left(\begin{smallmatrix}0&-1\\ 1 &0\end{smallmatrix}\right)$, we hence see in particular for $f$ satisfying weight $2-k$ modularity for $\SL_2(\Z)$, that the expansion of $f|U_{p^{\ell}}$ at $0$ is 
\begin{multline*}
f\Big| U_{p^{\ell}}\Big|_{k}S = p^{\left(\frac{k}{2}-1\right)\ell}\sum_{r\pmod{p^{\ell}}} f\Bigg|_{k}\left(\begin{matrix}1 & r\\ 0& p^{\ell}\end{matrix}\right)\left(\begin{matrix}0&-1\\ 1 &0\end{matrix}\right) \left(\begin{matrix}\frac{1}{g_r} & \frac{b}{p^{\ell}} \\ 0 &\frac{g_r}{p^{\ell}}\end{matrix}\right)\Bigg|_{k}\left(\begin{matrix}\frac{1}{g_r} & \frac{b}{p^{\ell}} \\ 0 &\frac{g_r}{p^{\ell}}\end{matrix}\right)^{-1}\\
= p^{\left(\frac{k}{2}-1\right)\ell}\sum_{r\pmod{p^{\ell}}} f\Bigg|_{k}\left(\begin{matrix}\frac{1}{g_r} & \frac{b}{p^{\ell}} \\ 0 &\frac{g_r}{p^{\ell}}\end{matrix}\right)^{-1}=p^{\left(\frac{k}{2}-1\right)\ell}\sum_{r\pmod{p^{\ell}}} f\Bigg|_{k}\left(\begin{matrix}g_r & -b \\ 0 & \frac{p^{\ell}}{g_r}\end{matrix}\right).
\end{multline*}
To compute the expansion at $0$, we then simplify
\begin{equation}\label{eqn:fcusp0}
f(z)\Big| U_{p^{\ell}}\Big|_{k}S=\frac{1}{p^{\ell}} \sum_{r\pmod{p^{\ell}}} g_r^{k}f\left(\frac{g_r^2}{p^{\ell}}z -\frac{g_r}{p^{\ell}}b\right).
\end{equation}
We now plug in the principal part $\sum_{-n_0<n<0}a_n q^n$ of $f$ to rewrite the right-hand side of \eqref{eqn:fcusp0} as
$$
\frac{1}{p^{\ell}}\sum_{-n_0<n<0}a_n\sum_{r\pmod{p^{\ell}}} g_r^{k}e^{-\frac{2\pi i g_r nb}{p^{\ell}}} q^{\frac{g_r^2 n}{p^{\ell}}}+O(1).
$$
Specifically, for $f=R_p$ (and $k\to 2-k$), Lemma \ref{lem:RpPP} implies that the principal part of $R_p|U_p^{\ell}$ is 
$$
p^{1-k-\ell}\sum_{r\pmod{p^{\ell}}} g_r^{2-k} e^{-\frac{2\pi i g_r nb}{p^{\ell}}} q^{-\frac{g_r^2}{p^{\ell-1}}}+p^{1-k-\ell}\sum_{r\pmod{p^{\ell}}} g_r^{2-k} e^{-\frac{2\pi i g_r nb}{p^{\ell}}} q^{-\frac{g_r^2}{p^{\ell}}}.
$$
\end{proof}

\section{Congruences}
For $k>2$ even, we first consider the properties of 
$$
E_{k,p}(z):=E_k(z)-p^{k}E_{k}(pz)
$$
and make use of the trick of Serre \cite{Serre} that 
\begin{equation}\label{eqn:Serrecong}
E_{(p-1)k}\equiv 1\pmod{p^{1+\ord_p(2k)}}.
\end{equation}
\begin{lemma}\label{lem:Ekp}
The function $E_{k,p}$ has integral coefficients for $r\in\N$ with $r=p^md$ we have
$$
E_{k,p}^r\equiv E_k^r\pmod{p^{k+m}}.
$$
  Furthermore, $E_{k,p}$ vanishes at the cusp zero.
\end{lemma}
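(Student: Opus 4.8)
The plan is to dispatch the three assertions separately, using the Fourier expansion of $E_k$ as the common starting point. Writing $E_k=1+c_k\sum_{n\ge1}\sigma_{k-1}(n)q^n$ with $c_k=-2k/B_k$, I would note that $E_k(pz)=E_k|V_p$ has the same (for the weights in play, integral) Fourier coefficients, merely supported on multiples of $p$. Hence $E_{k,p}=E_k-p^kE_k|V_p$ inherits integrality from $E_k$, since subtracting $p^kE_k|V_p$ only contributes integer multiples of $p^k$. At the same time this shows $E_{k,p}-E_k=-p^kE_k|V_p\equiv0\pmod{p^k}$, i.e.\ the base congruence $E_{k,p}\equiv E_k\pmod{p^k}$, which is the engine for everything that follows.

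For the $r$-th power congruence I would upgrade $E_{k,p}\equiv E_k\pmod{p^k}$ by the standard Frobenius lift for $p$-integral power series: if $A\equiv B\pmod{p^s}$, then $A^p\equiv B^p\pmod{p^{s+1}}$, because in the expansion of $(B+p^sC)^p$ the intermediate binomial coefficients are divisible by $p$ while the top term $p^{ps}C^p$ has valuation $ps\ge s+1$. Iterating this $m$ times gives $E_{k,p}^{p^m}\equiv E_k^{p^m}\pmod{p^{k+m}}$; raising both sides to the $d$-th power (which preserves a congruence of $p$-integral series, as $X^d-Y^d$ is divisible by $X-Y$) and setting $r=p^md$ then produces exactly $E_{k,p}^r\equiv E_k^r\pmod{p^{k+m}}$. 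Equivalently one can expand $(E_k-p^kE_k|V_p)^r$ directly and bound each term $\binom{r}{j}p^{jk}(\cdots)$: the $j=1$ term carries $p^k\cdot r$ with $\ord_p(r)=m$, and for $j\ge2$ the estimate $\ord_p\binom{r}{j}\ge m-\ord_p(j)$ combined with $(j-1)k\ge\ord_p(j)$ keeps the valuation $\ge k+m$.

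Finally, to see that $E_{k,p}$ vanishes at the cusp $0$ I would compute its expansion there by slashing with $S=\smatr{0}{-1}{1}{0}$. Since $E_k$ is modular on $\SL_2(\Z)$ we have $E_k|_kS=E_k$, while the matrix identity $\smatr{0}{-p}{1}{0}=S\smatr{1}{0}{0}{p}$ gives $E_k(pz)|_kS=p^{-k}E_k(z/p)$; hence $E_{k,p}|_kS=E_k(z)-E_k(z/p)$, whose constant term is $1-1=0$, so $E_{k,p}$ vanishes at $0$. The integrality and the cusp computation are routine; the hard part will be pinning down the \emph{exact} exponent $k+m$ in the power congruence, that is, checking that the single power of $p$ gained at each Frobenius step is neither lost nor over-counted and that it is precisely the valuation $\ord_p(r)=m$ of the linear term which controls the final bound.
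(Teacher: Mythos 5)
Your proposal is correct and takes essentially the same approach as the paper: the base congruence $E_{k,p}\equiv E_k\pmod{p^k}$ deduced from the integrality of the coefficients, and the identical computation $E_{k,p}\big|_k S = E_k(z)-E_k\left(\frac{z}{p}\right)$ whose constant term visibly vanishes at the cusp $0$. The paper dispatches the power congruence with ``follows immediately,'' and your Frobenius-iteration argument (equivalently, your direct binomial estimate using $\ord_p\binom{r}{j}\geq m-\ord_p(j)$) is precisely the standard filling-in of that step.
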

\begin{remark}
Note that in particular, for $p=3$, \eqref{eqn:Serrecong} implies that
$$
E_{6,3}\equiv E_6\equiv 1\pmod{3^2}.
$$
More generally, for $r\in\N_0$ and $j\in\N$ we have
\begin{equation}\label{eqn:E63cong}
E_{6,3}^{3^rj}\equiv E_6^{3^rj}\equiv 1\pmod{3^{r+2}}.
\end{equation}
\end{remark}
\begin{proof}
Since $E_k$ has coefficients in $\Z$, we see immediately that $E_{k,p}\equiv E_k\pmod{p^k}$.  The congruence for higher powers now follows immediately.  

The Fourier expansion of $E_{k,p}$ at the cusp zero is given by (using the modularity of $E_k$ on $\SL_2(\Z)$)
$$
E_{k,p}\Big|_k S(\tau) = E_k\Big|_k S(z) - p^kz^{-k} E_k\left(-\frac{p}{z}\right) = E_k(z) -E_k\left(\frac{z}{p}\right).
$$
We see immediately that the constant term vanishes.  
\end{proof}

In order to obtain interesting congruences, we multiply $\mathcal{F}_{\alpha,\delta}$ by $\Delta$ to kill the pole at $i\infty$ and then multiply by powers of $E_{k,p}$ to kill off the pole at $0$.  Doing so, we obtain congruences with classical modular forms.  
\begin{lemma}\label{lem:Ekppow}
For a holomorphic modular form $g\in M_{2+kr}(p)$, we have 
$$
 R_p|U_{p^{\ell}}\Delta E_{k,p}^r\equiv g\pmod{p^{\ell}}
$$
if and only if the congruence holds for the first
\begin{equation}\label{eqn:genbound}
\frac{p+1}{6} + \frac{k\left(p^{\ell+2}+p^{\ell+1}\right)}{12}
\end{equation}
coefficients.
\end{lemma}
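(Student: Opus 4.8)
The plan is to reduce the assertion to the classical Sturm bound on $\Gamma_0(p)$, whose index over $\SL_2(\Z)$ is $p+1$, so that a holomorphic form of weight $W$ on $\Gamma_0(p)$ is determined modulo $p^\ell$ by its first $\frac{W(p+1)}{12}$ coefficients. Write $h:=R_p|U_{p^\ell}\Delta E_{k,p}^r$. The ``only if'' direction is immediate, so the content is the ``if'' direction. First I would record that both $h$ and $g$ lie in $M^!_{2+kr}(\Gamma_0(p))$ and are holomorphic at the cusp $i\infty$: for $g$ this is the hypothesis, while for $h$ it follows from \eqref{eqn:Rpellinfty}, which shows $R_p|U_{p^\ell}$ has at worst a simple pole at $i\infty$, together with $\Delta=q+O(q^2)$ killing that pole and $E_{k,p}$ being holomorphic there. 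Thus $F:=h-g$ is a weakly holomorphic form of weight $2+kr$ on $\Gamma_0(p)$, holomorphic at $i\infty$, whose only singularity is a pole at the cusp $0$.

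Next I would control the pole of $F$ at $0$. By Lemma \ref{lem:RpUpPP} and the remark following it, $R_p|U_{p^\ell}$ has a pole of order at most $p^{\ell+2}$ at $0$ in the width-$p$ uniformizer; since $\Delta$ vanishes to order $p$ there and $E_{k,p}$ vanishes to order $1$ there (Lemma \ref{lem:Ekp}), the factor $\Delta E_{k,p}^r$ lowers this, and I would pin down the exact order of the pole of $F$ at $0$ in terms of $p$, $\ell$, and $r$. The reduction to a holomorphic Sturm bound then proceeds by multiplying $F$ by an auxiliary form $\phi$ with two properties: (i) $\phi$ is a $p$-adic unit at $i\infty$, so that multiplication by $\phi$ acts lower-triangularly with unit diagonal on $q$-expansions and hence preserves, in both directions, the property that the first $N$ coefficients vanish modulo $p^\ell$; and (ii) $\phi$ vanishes at $0$ to high enough order to cancel the pole of $F$ while bringing the total weight to $2+kp^{\ell+1}$. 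The natural building block is $E_{k,p}$ itself: its constant term $1-p^k$ is a $p$-adic unit, so it is invertible as a $q$-series modulo $p^\ell$ (giving (i)), and it vanishes at $0$ (giving (ii)).

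Granting such a $\phi$, set $\Phi:=\phi F\in M_{2+kp^{\ell+1}}(\Gamma_0(p))$. Since $\phi$ is invertible modulo $p^\ell$, we have $\Phi\equiv 0\pmod{p^\ell}$ if and only if $F\equiv 0\pmod{p^\ell}$; and by the lower-triangular observation the first $N$ coefficients of $\Phi$ vanish modulo $p^\ell$ exactly when those of $F$ do. Applying the classical Sturm bound to the holomorphic form $\Phi$ of weight $W=2+kp^{\ell+1}$, this holds once $N$ reaches $\frac{(2+kp^{\ell+1})(p+1)}{12}=\frac{p+1}{6}+\frac{k(p^{\ell+2}+p^{\ell+1})}{12}$, which is precisely \eqref{eqn:genbound}. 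Chaining the equivalences yields $F\equiv 0\pmod{p^\ell}$, i.e.\ $h\equiv g\pmod{p^\ell}$.

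The main obstacle is step (ii): the weight bookkeeping. The pole of $F$ at the width-$p$ cusp $0$ sits on the scale of $p^{\ell+2}$, whereas reaching weight exactly $2+kp^{\ell+1}$ permits only a multiplier of weight $k(p^{\ell+1}-r)$, and each factor of $E_{k,p}$ contributes only order $1$ of vanishing at $0$. Making the orders balance is therefore delicate and may require either a more efficient vanishing factor whose zeros are concentrated at $0$ (built from $\Delta$, at the cost of tracking the induced shift of the $q$-expansion in the coefficient correspondence), or a generalized valence bound for weakly holomorphic forms on $\Gamma_0(p)$ that incorporates the pole order at $0$ directly via the valence formula. Verifying that the clearing can be arranged so that exactly \eqref{eqn:genbound} coefficients suffice is where the careful computation lies.
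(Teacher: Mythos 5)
Your proposal follows the same route as the paper's proof: both arguments multiply through by powers of $E_{k,p}$, using that its constant term is a $p$-adic unit (you correctly say $1-p^k$; the paper's ``$1+p^4$'' is a typo) so that it is invertible as a $q$-series modulo $p^{\ell}$, and then reduce to Sturm's bound for holomorphic forms of weight $2+kp^{\ell+1}$ on $\Gamma_0(p)$, whose index $p+1$ gives exactly \eqref{eqn:genbound}. Indeed the paper passes from the stated congruence to \eqref{eqn:holcong} by multiplying by $E_{k,p}^{p^{\ell+1}-r}$, precisely your $\phi$. Your observation that multiplication by a unit-constant-term series acts triangularly on $q$-expansions, so that the \emph{truncated} congruence transfers in both directions, is a point the paper leaves implicit but which is genuinely needed to get the finite-coefficient statement of the lemma; you handle it more carefully.

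However, your step (ii) is left unresolved, and it is a genuine gap in your write-up: you never exhibit a multiplier of weight $k(p^{\ell+1}-r)$ that clears the pole at $0$. What you should know is that the paper does not exhibit one either --- it simply asserts that $R_p|U_{p^{\ell}}\Delta E_{k,p}^{p^{\ell+1}}$ is holomorphic of weight $2+kp^{\ell+1}$, and your order count shows exactly why this assertion is problematic: by the remark after Lemma \ref{lem:RpUpPP} the pole at the width-$p$ cusp $0$ has order up to $p^{\ell+2}$ in the local uniformizer $q^{1/p}$ (attained at $r=0$), while $\Delta E_{k,p}^{p^{\ell+1}}$ vanishes there only to order $p+p^{\ell+1}<p^{\ell+2}$. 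The authors' own treatment of the $p=3$ case of Theorem \ref{thm:main} tacitly confirms this: after multiplying by $\Delta E_{6,3}^{3}$ a pole of order $237$ in $q^{1/3}$ remains, which they clear with the additional factor $E_{4,3}^{237}$, pushing the weight to $968$ and the Sturm bound to $323$ coefficients --- more than \eqref{eqn:genbound} would predict at that weight. So the ``main obstacle'' you flag is not a deficiency peculiar to your attempt but a defect in the lemma's proof as written; a repair along one of the lines you propose (a valence-formula/generalized Sturm bound for weakly holomorphic forms that books the pole order at $0$ directly, or a more efficient vanishing factor with the induced change in weight and in the coefficient bound tracked explicitly) is what is actually required, and the bound \eqref{eqn:genbound} itself may need to be enlarged in the process. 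In short: same strategy as the paper, with the one unproven step being precisely the step the paper asserts without justification.
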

\begin{proof}
By Lemma \ref{lem:Ekp}, $E_{k,p}$ has integral coefficients (in its Fourier expansion over $i\infty$) and is invertible as a formal power series modulo $p^{\ell}$ because the constant term is $1+p^4$.  Hence for any $r\in\Z$
$$
f\equiv g\pmod{p^{\ell}}\Leftrightarrow fE_{k,p}^{r}\equiv gE_{k,p}^{r}\pmod{p^{\ell}}. 
$$
In particular, since for $\ell\geq 1$, the function $R_p|U_{p^{\ell}}$ has a pole of order at most one at $z$ and at most $p^{\ell+2}$ at $0$, the function 
$$
 R_p|U_{p^{\ell}}\Delta E_{k,p}^{p^{\ell+1}}
$$
is a holomorphic modular form of weight $2+kp^{\ell+1}$.  If $g$ is a holomorphic modular form of weight $2+kr$, then 
$$
R_p|U_{p^{\ell}}\Delta E_{k,p}^r\equiv g\pmod{p^{\ell}}
$$
if and only if 
\begin{equation}\label{eqn:holcong}
R_p|U_{p^{\ell}}\Delta E_{k,p}^{p^{\ell+1}}\equiv gE_{k,p}^{p^{\ell+1}-r}\pmod{p^{\ell}}.
\end{equation}
Since the congruence \eqref{eqn:holcong} is between holomorphic modular forms of weight $2+kp^{\ell+1}$ on $\Gamma_0(p)$, Sturm's bound \cite{Sturm} implies the congruence \eqref{eqn:holcong} if and only if it holds for the first 
$$
\left(\frac{1}{6} + \frac{kp^{\ell+1}}{12}\right)\left[\SL_2(\Z):\Gamma_0(p)\right]
$$
coefficients.  Moreover, the index of the subgroup is (cf. Proposition 1.7 of \cite{OnoBook}) 
$$
\left[\SL_2(\Z):\Gamma_0(p)\right]=p+1.
$$
From this we directly obtain \eqref{eqn:genbound}.
\end{proof}

We are finally ready to prove Theorem \ref{thm:main}.
\begin{proof}[Proof of Theorem \ref{thm:main}]
(1)  By Lemma \ref{lem:Fadfinal} (and in particular \eqref{eqn:Fadfinal}), for the unique choice of $\alpha$ and $\delta$ giving a $p$-adic modular form (its existence is proven in Theorem 1.1 (2) of \cite{BGK}), we have 
$$
\mathcal{F}_{\alpha,\delta}=\frac{\beta}{\beta'-\beta}\sum_{n=2}^{\infty}\left(\beta'^n-\beta^n\right)R_p\Big|U_{p^{n-1}}.
$$
By Lemma \ref{lem:RpPP}, $p^{k-1}R_p$ has integral coefficients.  If $\ord_p(\lambda_p)=\ord_p(\beta)=v$, then we conclude that 
$$
\mathcal{F}_{\alpha,\delta}\Delta\equiv \frac{\beta}{\beta'-\beta} \sum_{n=2}^{r}\left(\beta'^n-\beta^n\right)R_p\Big|U_{p^{n-1}}\pmod {p^{vr+1-k}}.
$$
We now choose $r_{\ell}$ such that $v r_{\ell}+1-k\geq \ell$ to obtain a congruence to a weakly holomorphic modular form modulo $p^{\ell}$. 

By Lemma \ref{lem:RpUpPP} and Lemma \ref{lem:Ekp}, $R_p\Big|U_{p^{n-1}}\Delta E_{k_p,p}^{p^{n}}$ is a holomorphic modular form.  Moreover, Lemma \ref{lem:Ekp} and \eqref{eqn:Serrecong} imply that
\begin{equation}\label{eqn:FEkpcong}
\mathcal{F}_{\alpha,\delta}\Delta E_{k_p,p}^{p^{\ell_1}}\equiv \mathcal{F}_{\alpha,\delta}\Delta E_{k_p}^{p^{\ell_1}}\equiv \mathcal{F}_{\alpha,\delta}\Delta\pmod{p^{1+\ell_1}}.
\end{equation}
Therefore 
$$
\mathcal{F}_{\alpha,\delta}\Delta \equiv \frac{\beta}{\beta'-\beta}\sum_{n=2}^{r_{\ell}}\left(\beta'^n-\beta^n\right)R_p\Big|U_{p^{n-1}}\Delta E_{k_p,p}^{p^{\ell_1}} \pmod {p^{1+\ell_1}}.
$$
To obtain a congruence to a holomorphic modular form modulo $p^{\ell}$, we need $\ell_1\geq r_{\ell}$ (so that each summand is a holomorphic modular form) and $1+\ell_1\geq \ell$.  It thus remains to determine $r_{\ell}$.  We have restricted ourself to the case $k=12$, so this is 
$$
r_{\ell}\geq \frac{11+\ell}{v}.
$$
Hence we choose 
$$
\ell_1=\max\left\{\frac{11+\ell}{v}, \ell-1\right\}.
$$
The weight of the form is $2+k_p p^{\ell_1}$.

(2)  In the specific case $p=3$, we have $v=2$ and \eqref{eqn:FEkpcong}, and hence $r_{\ell}\geq \frac{11+\ell}{2}$, so that 
$$
\ell_1=\max\left\{ \frac{11+\ell}{2},\ell-2\right\}.
$$

We would like to show that 
$$
3^7\mathcal{F}_{\alpha,\delta}\Delta \equiv E_2+\Delta\pmod{3^3}
$$
from which the other congruences are immediately implied.  Note first that $E_2$ is not a modular form (it is a weight 2 mock modular form).  It is well-known that 
$$
\widehat{E}_2(z):=E_2(z)-\frac{3}{\pi y}
$$
is a weight $2$ harmonic weak Maass form, where $y=\im(z)$.  We see immediately that 
$$
\widetilde{E}_{2,p}:=E_2(z)-pE_2|V_p(z) = E_2(z)- \frac{3}{\pi y} -pE_2|V_p(z) +p\left(\frac{3}{\pi p y}\right) = \widehat{E}_2(z)-\widehat{E}_2(z)|V_p
$$
is a holomorphic modular form (because the left-hand side is holomorphic and the right-hand side satisfies weight $2$ modularity).  Hence
$$
\widetilde{E}_{2,3}^{10}+3E_{10}^2|V_3
$$
is a weight $20$ modular form and we claim that 
\begin{align}
\label{eqn:E2cong}        E_2&\equiv \widetilde{E}_{2,3}^{10}+3E_{10}^2|V_3\pmod{3^3},\\
\label{eqn:Deltacong} 9\Delta&\equiv 9\Delta \widetilde{E}_{2,3}^4\pmod{3^3},
\end{align}
so that 
\begin{equation}\label{eqn:E2Deltacong}
E_2+9\Delta\equiv \widetilde{E}_{2,3}^{10}+3E_{10}^2|V_3+9\Delta \widetilde{E}_{2,3}^4\pmod{3^3}.
\end{equation}
The congruence \eqref{eqn:Deltacong} follows immediately from $E_2\equiv 1\pmod{3}$.  Writing $E_{10}=E_4E_6$ and noting that $E_6\equiv 1\pmod{3^2}$ by \eqref{eqn:Serrecong}, we have 
$$
\widetilde{E}_{2,3}^{10}+3E_{10}^2|V_3\equiv \widetilde{E}_{2,3}^{10}+3E_{4}^2|V_3\pmod{3^3}.
$$
By the binomial theorem, one easily sees that 
\begin{equation}\label{eqn:E23cong}
\widetilde{E}_{2,3}^9\equiv 1\pmod{3^3}.
\end{equation}
It hence remains to show that 
$$
E_4^2\equiv E_2\pmod{3^2}.
$$
To show this, we simply note that by \eqref{eqn:Serrecong} and $E_2\equiv \widetilde{E}_{2,3}\equiv 1\pmod{3}$ we have
$$
E_2=\widetilde{E}_{2,3}+3E_2|V_3\equiv \widetilde{E}_{2,3}+3\equiv 4\widetilde{E}_{2,3}\equiv 4\widetilde{E}_{2,3}E_6\pmod{3^2}.
$$
By Sturm's bound \cite{Sturm}, the congruence $E_4^2\equiv 4 \widetilde{E}_{2,3}E_6\pmod{3^2}$ is verified by comparing $3$ coefficients.  We have hence shown \eqref{eqn:E2cong}, and we therefore would like to show 
$$
3^7\mathcal{F}_{\alpha,\delta}\Delta\equiv  \widetilde{E}_{2,3}^{10}+3E_{10}^2|V_3+9\Delta \widetilde{E}_{2,3}^4\pmod{3^3}.
$$
Since (see \eqref{eqn:E63cong} for the congruence)
$$
E_{6,3}^3\equiv 1\pmod{3^3}
$$
is a weight $18$ modular form on $\Gamma_0(3)$, this is equivalent to showing that 
\begin{equation}\label{eqn:toshow}
3^7\mathcal{F}_{\alpha,\delta}\Delta E_{6,3}^3 \equiv  \widetilde{E}_{2,3}^{10}+3E_{10}^2|V_3+9\Delta \widetilde{E}_{2,3}^4\pmod{3^3}.
\end{equation}
Since $3^{11}R_{3}$ has integral coefficients by Lemma \ref{lem:RpPP}, $\ord_3(\beta)=2$ and $\ord_3(\beta')=9$, we furthermore obtain by \eqref{eqn:Fadfinal} that
$$
3^7\mathcal{F}_{\alpha,\delta}\equiv -3^{-4}\frac{\beta}{\beta'-\beta} \sum_{n=2}^{4}\beta^n\cdot  3^{11}R_3\Big|U_{3^{n-1}}\pmod{3^3}.
$$
Hence \eqref{eqn:toshow} is equivalent to 
\begin{equation}\label{eqn:weaktoshow}
-3^{-4}\Delta E_{6,3}^3\frac{\beta}{\beta'-\beta} \sum_{n=2}^{4}\beta^n \left( 3^{11}R_3\Big|U_{3^{n-1}}\right) \equiv \widetilde{E}_{2,3}^{10}+3E_{10}^2|V_3+9\Delta \widetilde{E}_{2,3}^4\pmod{3^3}.
\end{equation}
In \eqref{eqn:weaktoshow}, we finally have weight $20$ weakly holomorphic modular forms on $\Gamma_0(3)$ on both sides of the congruence.  The right-hand side is furthermore a holomorphic modular form and the left-hand side has trivial principal part at $i\infty$ by \eqref{eqn:Rpellinfty}.  

By Lemma \ref{lem:RpUpPP}, the highest power of $q^{-1/3}$ occurring in the principal part of $R_3\Big|U_{3^{n-1}}$ is at most $3^{n+1}$.  The highest power of $n$ occuring is $n=4$, giving a pole of order $3^5$.  Since $\Delta$ vanishes to order $3$ (in powers of $q^{-1/3}$) at $0$ and $E_{6,3}$ vanishes to order $1$, we conclude that the highest power of $q$ in the principal part at $0$ is 
$$
q^{-237/3}=q^{-79}.
$$
We now multiply both sides of \eqref{eqn:weaktoshow} by $E_{4,3}^{237}$.  This yields a congruence between modular forms of weight $968$ on $\Gamma_0(3)$, requiring the comparison of $323$ coefficients to check by Sturm's bound \cite{Sturm}.  Note that to do so we must calculate $8721$ coefficients of $R_3$, since we apply $U_{3^3}$ to this.
 
\end{proof}

\end{document}